\numberwithin{equation}{section}
\newcommand{\Prob}{\mathbb{P}}
\newcommand{\R}{\mathbb{R}}
\newcommand{\Indi}[1]{\mathop{\mathbbm{1}_{#1}}\nolimits}
\newcommand\nIk{z_{k}}
\newcommand\nn{^{(n)}}
\newcommand{\dImax}{z_*} 
\newcommand{\bZ}{Z}
\newcommand{\bpT}{T}
\newcommand{\hp}{p}
\newcommand{\hm}{\mu} 
\newcommand{\bpC}{\hat c} 
\newtheorem{theorem}{Theorem}[section]
\newtheorem{lemma}[theorem]{Lemma}
\newtheorem{corollary}[theorem]{Corollary}
\theoremstyle{definition}
\newtheorem{remark}[theorem]{Remark}
\theoremstyle{remark}
\newcommand{\refT}[1]{Theorem~\ref{#1}}
\newcommand{\refC}[1]{Corollary~\ref{#1}}
\newcommand{\refL}[1]{Lemma~\ref{#1}}
\newcommand{\refR}[1]{Remark~\ref{#1}}
\newcommand{\sumk}{\sum_{k=0}^\infty}
\newcommand\bigpar[1]{\bigl(#1\bigr)}
\newcommand\Bigpar[1]{\Bigl(#1\Bigr)}
\newcommand\biggpar[1]{\biggl(#1\biggr)}
\newcommand\lrpar[1]{\left(#1\right)}
\newcommand\parfrac[2]{\lrpar{\frac{#1}{#2}}}
\newcommand\punkt{.\spacefactor=1000}
\newcommand\ie{i.e\punkt}
\newcounter{CC}
\newcounter{cc}
\newcommand{\cc}{\stepcounter{cc}\ccx} 
\newcommand{\ccx}{c_{\arabic{cc}}}     
\newcommand{\ccdef}[1]{\xdef#1{\ccx}}     
\newcommand\Exp{\operatorname{Exp}}
\newcommand\gb{\beta}
\newcommand\gl{\lambda}
\newcommand\eps{\epsilon}
\renewcommand{\=}{:=}
\newcommand\dd{\,d}
\newcommand\rhs{right-hand side}
\begin{document}

\title{The extinction time of a subcritical branching process related to the SIR epidemic on a random graph}


\author{Peter Windridge}
\maketitle

\begin{abstract}

We give an exponential tail approximation for the extinction time of a 
subcritical multitype branching process arising from the SIR epidemic model on a random graph with given degrees,
where the type corresponds to the vertex degree.  
As a corollary we obtain a Gumbel limit law for the extinction time, when beginning with a large population.
 
Our contribution is to allow countably many types 
 (this corresponds to unbounded degrees in the random graph epidemic model, 
 as the number of vertices tends to infinity).  We only require
 a second moment for the offspring-type distribution featuring in our model.
\end{abstract}


\medskip{} Keywords: multitype branching process, exponential tail approximation, Gumbel, SIR epidemic.
MSC2010: 60J80;92D30 / 05C80;60J28.

\section{Introduction}\label{s:intro}

In this note we consider a continuous time Markov multitype branching process $\bZ = (\bZ_t(k); \; t \ge 0, \; k = 1,2,\ldots )$
arising from the 
 Susceptible-Infective-Recovered (SIR) epidemic model on a random graph with given degrees.  (We mention this connection
 only as motivation and do not explain it in detail.  If desired, the reader can consult \cite{JansonLuczakWindridgeSIR} 
 or \cite{BohmanPicollelli} for a construction in which the branching process
 studied here is apparent.  See also \refR{r:rgcm} below.)

Individuals in $\bZ$ are thought of as infective hosts carrying a number of spores.  
An individual's type $k \ge 1$ is simply the number of spores it has.  (We 
ignore individuals with no spores.)
Each spore, at a given rate $\gb > 0$, is released and 
gives rise to a new infective individual.  The new individual has a random type 
(\ie{} number of spores), $J$ say, 
chosen according to some given probability distribution, denoted $(\hp_j)_{j=0}^\infty$.  
(We allow $\hp_0 > 0$, and $J = 0$ means that no new infective is produced.)  
This leaves one individual of type $k-1$ (representing the original individual after losing a spore, assuming $k\ge 2$), 
and another with type $J$ (assuming $J \ge 1$). 
Furthermore, each individual (including its hosted spores) is removed from the population  
at rate $\rho \ge 0$, leaving nothing in its stead, regardless of type.  Thus an individual of type $k$
has an exponentially distributed lifetime of rate  $\rho + \gb k$, and is replaced by $0$, $1$ or $2$
individuals. 

This is a standard form of multitype branching process.  
Classical theory provides an exponential tail approximation
for the extinction time in the subcritical case, 
at least when there are finitely many types, \ie{} $(\hp_j)_{j=0}^\infty$ has finite support (see 
\cite{heinzmanextinctiontime}, and \cite[Chapter V, Theorem 11.1]{harris} for the single type result of Sewastjanow).  
In the general denumerable case, extinction itself is more delicate, 
for example extinction can be almost sure even though the expected population size tends to infinity \cite{hautphenne2013}.  
Tail approximations have been made in special cases, such as birth distributions of linear-fractional form \cite{Sagitov20132940}.  
However, there does not seem to be an applicable result for our rather simple model.
In this note we require only that $(\hp_j)_{j=0}^\infty$ has a second moment.

Denote by
\begin{equation}
q_k(t) = \Prob(\bZ_t \neq 0| \bZ_0(i) = \delta_{ki}, i\ge 1),
\end{equation}
the probability that the process survives till time $t \ge 0$ when it begins with a single 
host carrying $k \ge 1$ spores.  (We find it more intuitive to 
speak of hosts and their spore counts in the sequel, rather than using the branching process terminology of individuals and their type.)  The exponential approximation result 
we prove is as follows.

\begin{theorem}\label{t:qkexp}
Suppose $\hm \= \sumk k\hp_k > 0$,
\begin{equation}\label{e:decayrate}
\gl \= \rho + \gb(1 - \hm) > 0,
\end{equation}
and
\begin{equation}
\label{d:cubeUI}
\sumk k^2\hp_k < \infty.
\end{equation}

Then there exists a constant $\bpC \in (0,1]$ such that,
for any $a < \min\{\lambda, \beta\}$,
\begin{equation}\label{e:qexp}
q_k(t) = \bpC k e^{-\gl t}( 1 + O(k e^{-a t})) ~\mathrm{as}~ t \to \infty
\end{equation}
for all $k \ge 1$.  
\end{theorem}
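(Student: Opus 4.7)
The plan is to analyse the backward Kolmogorov ODE for $q_k(t)$ and bootstrap from a Markov-type a priori bound. Conditioning on the first event of a type-$k$ root -- removal at rate $\rho$, or one of the $k$ spores firing at total rate $\gb k$ -- gives, with $W(t) := \sum_{j\ge 0}\hp_j q_j(t)$ and the convention $q_0 \equiv 0$,
\begin{equation*}
\frac{dq_k}{dt} = -(\rho + \gb k)q_k + \gb k q_{k-1} + \gb k W(t)(1 - q_{k-1}),
\qquad q_k(0) = 1.
\end{equation*}
A short computation shows that the sequence $h_k = k$ is a right eigenvector of the mean generator with eigenvalue $-\gl$, hence $M_t := e^{\gl t}\sum_j j\,\bZ_t(j)$ is a nonnegative martingale starting from $k$ when $\bZ_0 = e_k$. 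Since $\{\bZ_t \ne 0\} \subseteq \{\sum_j j\,\bZ_t(j) \ge 1\}$, Markov's inequality yields the clean a priori bound $q_k(t) \le k\,e^{-\gl t}$, which already pins down the correct order and forces $\bpC \le 1$.

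Next, I rescale by $\psi_k(t) := e^{\gl t}q_k(t) \in [0, k]$ and $\hat W(t) := e^{\gl t}W(t) = \sum_j\hp_j\psi_j(t) \in [0, \hm]$. Using the identity $\rho + \gb - \gl = \gb\hm$, the $k = 1$ equation integrates to the linear Volterra form
\begin{equation*}
\psi_1(t) = e^{-\gb\hm t} + \gb\int_0^t e^{-\gb\hm s}\,\hat W(t-s)\,ds,
\end{equation*}
while for $k \ge 2$,
\begin{equation*}
\psi_k' = -\gb(k + \hm - 1)\psi_k + \gb k \psi_{k-1} + \gb k \hat W(t)(1 - q_{k-1}(t)).
\end{equation*}
The linear operator obtained from the right-hand side (dropping the $q_{k-1}$ factor inside the last term) annihilates $\psi_k = k$, and the remaining nonlinear correction $-\gb k\hat W q_{k-1}$ is $O(k^2 e^{-\gl t})$ by the Markov bound. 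I would therefore (i) show $\hat W(t)$ converges to some $\hat W_\infty \in (0, \hm]$ as $t \to \infty$ via a sub/super-solution comparison on the Volterra equation, with the first moment $\hm < \infty$ providing the necessary dominated convergence through $\psi_j \le j$; then (ii) deduce $\psi_k(t)/k \to \bpC := \hat W_\infty/\hm$ for every $k$. Positivity of $\bpC$ is then obtained by a standard Yaglom-type lower bound for subcritical branching processes with a second moment assumption.

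For the error rate, set $\eps_k(t) := \psi_k(t) - \bpC k$. Linearising around the fixed point produces a forced linear equation for $\eps_k$ whose forcing is the nonlinear term, of size $O(k^2 e^{-\gl t})$. A Gronwall-type bound applied to the integral form of this equation, combined with the estimate
\begin{equation*}
\biggabs{\sum_{j} \hp_j \eps_j(t)} \;\lesssim\; e^{-at}\sum_j \hp_j j^2,
\end{equation*}
yields $|\eps_k(t)| = O(k^2 e^{-at})$ for any $a < \min\{\gl, \gb\}$, which rearranges to \eqref{e:qexp}. The principal technical obstacle is handling the countable type space with only a second moment assumption: the tail $\sum_{j \ge K}\hp_j \psi_j(t)$ must be controlled uniformly in $t$, both to pass limits inside $\hat W$ and to estimate its error. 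Domination by $\hp_j j$ (with $\hm = \sum_j j\hp_j < \infty$) suffices for convergence of $\hat W(t)$ itself, but the error estimate closes only because $\eps_j$ scales like $j^2$ and $\sum_j \hp_j j^2 < \infty$ by \eqref{d:cubeUI}; this is precisely where the second moment hypothesis enters in an essential way.
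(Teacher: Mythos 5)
Your setup is correct as far as it goes: the backward system you write down is right, $h_k=k$ is indeed a right eigenvector of the mean generator with eigenvalue $-\gl$, and the resulting a priori bound $q_k(t)\le k e^{-\gl t}$ matches what the paper obtains more elementarily from the union bound $q_k\le kq_1$ together with $(e^{\gl t}q_1(t))'\le 0$. But the heart of your plan --- linearising the infinite ODE system around the fixed point $\psi_k=\bpC k$ and closing with ``a Gronwall-type bound'' --- is exactly the route the paper explicitly declines to take: the finite-type proof of Heinzmann uses a perturbation theorem for linear systems, and the author states that no suitable analogue for infinite systems was available. Your sketch does not supply one. Concretely: (i) your step showing $\hat W(t)\to\hat W_\infty$ is circular, since the Volterra equation you display determines only $\psi_1$ in terms of $\hat W=\sum_j\hp_j\psi_j$, while convergence of $\hat W$ requires convergence of every $\psi_j$ --- domination by $\hp_j j$ lets you interchange limit and sum only \emph{after} each $\psi_j(t)$ is known to converge, which is the thing to be proved; (ii) the linear part of the $\eps_k$ system has the unbounded off-diagonal coefficient $\gb k$ on $\eps_{k-1}$, and a naive Duhamel/Gronwall iteration up the index $k$ multiplies the bound by factors $\gb k/(\gb(k+\hm-1)-a)$ whose product grows polynomially in $k$, so the claimed uniform rate $|\eps_k(t)|=O(k^2e^{-at})$ does not follow without a genuinely new contraction argument in a weighted space; (iii) positivity of $\bpC$ is not a ``standard Yaglom-type'' fact here --- for denumerable type spaces even a.s.\ extinction is delicate, and the paper has to manufacture the lower bound $q_1(t)\ge c\,e^{-(\gl+\eps)t}$ by truncating the offspring law to finitely many types and invoking the finite-type theorem.

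The idea your proposal is missing is the paper's probabilistic replacement for all of this: with $T_1,\dots,T_k$ the persistence times of the $k$ initial spores and their progenies, Bonferroni gives $kq_1(t)-k^2\Prob(T_1,T_2>t)\le q_k(t)\le kq_1(t)$, and the pair correlation $\Prob(T_1,T_2>t)$ is bounded by conditioning on the host's removal time $R$ (the only source of dependence between spores) and integrating the square of a one-spore survival kernel against the law of $R$. This yields $q_k(t)=kq_1(t)(1+O(ke^{-at}))$ directly, reduces the whole problem to the single scalar ODE for $q_1$, and is where the second moment $\sumk k^2\hp_k<\infty$ enters (through $\sumk \hp_k q_k = q_1(\hm+O(e^{-at}))$). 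Without either this lemma or a worked-out infinite-dimensional perturbation theorem, your argument does not close.
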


The condition in \eqref{e:decayrate} means that $\bZ$ is subcritical.  
In the ommitted case of $\hm = 0$,
no new hosts occur and $q_k(t) = e^{-\rho t}(1 - (1 - e^{-\beta t})^k)$ 
 is just the probability that the initial host is still present and at least one of its spores remains.  

\begin{remark}\label{r:c}
Any value of $\bpC \in (0,1]$ is possible.  Indeed, if $\rho = 0$ then all spores behave independently
and form a single type branching process.  The 
case $\hp_0 + \hp_1 + \hp_2 = 1$ is linear fractional \cite[III.5, p. 109]{AthreyaNey} 
and $q_1(t)$ can be computed explicitly.  
More specifically, $\hp_0 + \hp_2 = 1$ corresponds 
to a linear birth and death chain and for $\hp_0 \neq \hp_2$ we have
\begin{equation}
q_1(t) = \frac{(\hp_0- \hp_2)e^{-\gb(\hp_0 - \hp_2)t} }{\hp_0 - \hp_2 e^{-\gb(\hp_0 - \hp_2)t}}.
\end{equation}
Thus, the leading constant in \refT{t:qkexp} is $\bpC = 1 - \hp_2/\hp_0$, for any $\hp_2 <\hp_0$.
\end{remark}

\refT{t:qkexp} can be used to examine the distribution of the duration of a
subcritical epidemic where there are initially large number of infective hosts.  
The utility of such a result is perhaps not immediately apparent, so we remark
that the situation described also arises at the end of a major epidemic, 
when an outbreak has become so large that it starts shrinking 
due to there being few remaining susceptible individuals.

\begin{corollary}\label{c:doubleexp}
Adopt the setting of \refT{t:qkexp}.   Suppose that 
for each $n \ge 1$ we have a sequence $z_k = z_k\nn$, $k \ge 1$ of natural numbers
such that $\sumk k\nIk \to \infty$, 
and 
\begin{equation}\label{d:mystery0}
\sumk k^2\nIk = o\lrpar{\sumk k\nIk}^{1 + (a/\gl)}
\end{equation}
as $n\to\infty$, for some $a > 0$ satisfying the condition in \refT{t:qkexp}. 

Suppose $\bZ_0(k) = z_k$ for every $k$ and $\bpT \= \inf\{ t \ge 0: \bZ_t = 0\}$.  Then, for any fixed $w \in \R$,
\begin{equation}
\label{e:doubleexp}
 \Prob\biggpar{\gl \bpT \le \ln\Bigpar{\bpC \sumk k \nIk} + w} \to e^{-e^{-w}}
\end{equation}
as $n\to\infty$.
\end{corollary}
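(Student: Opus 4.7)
\textbf{Proof plan for \refC{c:doubleexp}.} Since the subtrees of $\bZ$ rooted at distinct initial hosts evolve independently, $\bpT$ is the maximum of $\sum_k \nIk$ independent extinction times, and thus
\begin{equation*}
\Prob(\bpT \le t) = \prod_{k \ge 1} (1 - q_k(t))^{\nIk}.
\end{equation*}
Writing $M_n \= \sum_k k \nIk$ and $t_n(w) \= \gl^{-1}(\ln(\bpC M_n) + w)$, the plan is to substitute $t = t_n(w)$, take logarithms, and show $\sum_k \nIk \ln(1 - q_k(t_n(w))) \to -e^{-w}$. Via $\ln(1-x) = -x + O(x^2)$ for $x \le 1/2$, this reduces to verifying (i) $\sum_k \nIk q_k(t_n(w)) \to e^{-w}$, (ii) $\sum_k \nIk q_k(t_n(w))^2 \to 0$, and (iii) $\max\{q_k(t_n(w)) : \nIk \ge 1\} \to 0$.

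Step (i) is essentially a direct application of \refT{t:qkexp}: since $\bpC e^{-\gl t_n(w)} = e^{-w}/M_n$ and $e^{-a t_n(w)} = O(M_n^{-a/\gl})$, the expansion there yields
\begin{equation*}
\sum_k \nIk q_k(t_n(w)) = \frac{e^{-w}}{M_n}\biggpar{M_n + O\biggpar{e^{-a t_n(w)} \sum_k k^2 \nIk }} = e^{-w}\biggpar{1 + O\biggpar{\frac{\sum_k k^2 \nIk}{M_n^{1+a/\gl}}}},
\end{equation*}
which converges to $e^{-w}$ by \eqref{d:mystery0}.

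The main technical point is (iii) together with (ii); both hinge on the integer-valuedness of $\nIk$. Set $K_n \= \max\{k : \nIk \ge 1\}$, which is finite since $\sum_k k^2 \nIk < \infty$. Then $K_n^2 \le K_n^2 z_{K_n} \le \sum_k k^2 \nIk$, and combined with \eqref{d:mystery0} this yields $K_n = o\bigpar{M_n^{(1+a/\gl)/2}}$. Reading \refT{t:qkexp} as a uniform upper bound $q_k(t_n(w)) \le C_1 k/M_n + C_2 k^2/M_n^{1+a/\gl}$ and evaluating at $k = K_n$ gives (iii): the first term is $o(1)$ because $K_n/M_n \to 0$ (using $(1+a/\gl)/2 < 1$ for $a < \gl$), and the second because $K_n^2/M_n^{1+a/\gl} \le \sum_k k^2 \nIk / M_n^{1+a/\gl} = o(1)$. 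For (ii), squaring the same upper bound and summing,
\begin{equation*}
\sum_k \nIk q_k(t_n(w))^2 = O\biggpar{\frac{\sum_k k^2 \nIk}{M_n^2} + \frac{K_n^2 \sum_k k^2 \nIk}{M_n^{2+2a/\gl}}} = o(1),
\end{equation*}
the first term vanishing by \eqref{d:mystery0} and $a < \gl$, the second by $K_n^2 \sum_k k^2 \nIk \le \bigpar{\sum_k k^2 \nIk}^2 = o(M_n^{2+2a/\gl})$.

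Combining (i)--(iii) yields $\sum_k \nIk \ln(1 - q_k(t_n(w))) \to -e^{-w}$, hence $\Prob(\bpT \le t_n(w)) \to e^{-e^{-w}}$. The main obstacle is to justify the Taylor expansion $\ln(1-q_k) = -q_k + O(q_k^2)$ uniformly over $k$ with $\nIk \ge 1$; this is exactly what (iii) provides, and its proof crucially combines the moment hypothesis \eqref{d:mystery0} with the integer-valuedness of the initial data $\nIk$.
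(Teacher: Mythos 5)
Your proposal is correct and follows essentially the same route as the paper: the same product formula from the branching property, the same choice of $t_n(w)$, the upper estimate from Theorem~\ref{t:qkexp} combined with \eqref{d:mystery0}, and the same use of integer-valuedness to get $\max\{k:\nIk\ge 1\}=o\bigpar{\sum_k k\nIk}$ so that the logarithm can be expanded. The only cosmetic difference is that you phrase the error control as $\sum_k \nIk q_k^2\to 0$ via the squared uniform bound, while the paper splits into separate upper and lower bounds using $\ln(1-h)\le -h$ and $\ln(1-h)\ge -h(1+h)$ with the cruder estimate $q_k(t)\le k e^{-\gl t}$.
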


The double exponential distribution on the \rhs{} of \eqref{e:doubleexp} is known as the Gumbel distribution.
It arises from taking the maximum of a large number of independent random variables with exponential tails,
and thus is common in the context of branching process extinction times \cite{pakes89, Jagers10042007, heinzmanextinctiontime}.

\begin{remark}\label{r:rgcm}
When studying the SIR epidemic on a random graph with given degree sequence, one typically
constructs (or `reveals') relevant parts of the graph 
while the disease spreads, via a device known as the configuration model 
(see \cite{JansonLuczakWindridgeSIR} and the references therein). 
For the benefit of readers familiar with the configuration model approach, 
it is worth noting that the `spores' in this paper correspond to half-edges, and a `host' is just a vertex in the graph.
To apply our result, the probability distribution $(\hp_k)_{k=0}^\infty$ should be a size-biased 
transform of the graph degree distribution.  In particular, our second moment condition translates to a third moment requirement
for the vertex degree distribution.  The details of a pathwise coupling, and conditions needed for it to hold with high probability,
are left for future work.
\end{remark}

The proofs of \refT{t:qkexp} and \refC{c:doubleexp}, in sections \ref{s:qkexp}
and \ref{s:doubleexp} respectively, occupy the remainder of this note.


\acks
Funded by EPSRC grant EP/J004022/2 (principal investigator Malwina Luczak).  The author thanks
Svante Janson and Malwina Luczak for useful comments.

\section{Proof of \refT{t:qkexp}}\label{s:qkexp}

The general idea is to show that $q_k(t) \sim k q_1(t)$, 
as $t \to \infty$, by controlling the dependencies between spores of the same host, see \refL{l:qkadd} below.  
First we need preliminary bounds 
on $q_1(t)$.  Let us fix $a < \min\{\gl,\gb\}$.  
Suppose initially there is a single host with one spore.  
Let $R$ denote its $\Exp(\rho)$ removal time, 
and $F$ the $\Exp(\gb)$ release time of its single spore.  The process survives till a given time $t > 0$
if and only if either the spore is released before $t$ (and necessarily before the host is removed) and its 
progeny persist till time $t$, or neither the spore is released nor the host removed by $t$.  Thus
\begin{align}
q_1(t) & = \int_0^t \sumk \hp_k q_k(t-f)\Prob(F \in \dd f) \Prob(R > f) + \Prob(F,R > t) \nonumber \\
& = \int_0^t \sumk \hp_k q_k(t-f)\beta e^{-(\rho+\gb)f} \dd f + e^{-(\rho+\gb)t}, \label{e:q1int}
\end{align}
from which we obtain the differential equation
\begin{equation}\label{e:q1ode}
 q_1'(t) = -(\rho + \gb)q_1(t) + \gb \sumk \hp_k q_k(t), \;\; t \ge 0, \; q_1(0)=1.
\end{equation}
This also follows from the Kolmogorov backwards equations, but deriving it
from the integral is a useful warmup for the calculations below.

Now suppose there is initially a host with $k \ge 2$ spores.  Survival 
of the process till time $t$ 
implies that at least one of the $k$ initial spores, or its progeny, persist till $t$.  It follows 
that $q_k(t) \le k q_1(t)$.  Using this inequality with \eqref{e:q1ode} yields
\begin{equation}
q_1'(t) \le -(\rho + \gb)q_1(t) + \gb\sumk k \hp_k q_1(t) = -\gl q_1(t),
\end{equation}
and so $e^{\gl t} q_1(t)$ is non-increasing in $t$.  It is positive and so the limit
\begin{equation}\label{e:bpC}
\bpC \= \lim_{t\to\infty} e^{\gl t} q_1(t)
\end{equation}
exists.  We have $\bpC \le 1$ by monotonicity of $e^{\gl t} q_1(t)$ and the fact that $q_1(0) = 1$.

It will later transpire that $\bpC > 0$.  For now we will lower bound $q_1(t)$ 
by truncating the spore count distribution $(p_k)_{k = 0}^\infty$.  More precisely, 
take any positive $\eps < \min\{\gl,\gb\} - a$, and
choose $k_0 \ge 1$ large enough that $\sum_{ k =1}^{k_0} k p_k > \mu - \eps/\beta$.
Our branching process $\bZ$ dominates a modified process in which spore counts of new hosts are 
distributed as $J\cdot \Indi{ J \le k_0}$, where $J \sim (p_k)_{k = 0}^\infty$.
This modified process has finitely many types, and so the exponential tail 
approximation of \cite{heinzmanextinctiontime} applies,
with decay rate $\rho + \gb  - \gb \sum_{ k =1}^{k_0} k p_k < \gl + \eps$.  In particular 
there exists $\cc\ccdef\ccqL = \ccqL(\eps)  > 0$ such that
\begin{equation}\label{e:q1crude}
q_1(t) \ge \ccqL e^{-(\gl + \eps)t}
\end{equation}
for all $t \ge 0$.


\begin{lemma}\label{l:qkadd}
For all $k \ge 1$ we have
\begin{equation}\label{e:qkbon0}
 q_k(t) = kq_1(t)(1 + O(k e^{-at})),
\end{equation}
as $t \to \infty$.
\end{lemma}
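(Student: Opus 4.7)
The plan is inclusion--exclusion on the $k$ initial spores, with conditional independence given the host removal time $R\sim\Exp(\rho)$ used to handle the sole source of dependence between them. The main obstacle will be setting up the decomposition correctly, since the initial host is shared by all $k$ spores; after that the estimates reduce to an elementary alternating-series inequality plus a short ODE bound on a $\rho=0$ version of $q_1$.

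Label the initial spores $1,\dots,k$, and for each $i$ let $A_i$ be the event that the lineage descended from spore $i$ (spore $i$ itself while on the initial host, together with the subtree it spawns at release, if any) contains at least one host at time $t$. Tracing any surviving spore at time $t$ back through its release events shows $\{\bZ_t\neq 0\}=\bigcup_i A_i$. Conditional on $R$, the $A_i$ are i.i.d.\ Bernoulli, since spore $i$'s fate depends only on its own $\Exp(\gb)$ release clock $F_i$, the type $J_i$ of the host it spawns (provided $F_i<R$), and the subsequent independent subprocess. Splitting on whether $r\le t$ yields
\begin{equation*}
h(r,t) \= \Prob(A_i\mid R=r) = S(t) - \Indi{r\le t}\,e^{-\gb r}\,S(t-r),
\end{equation*}
where $S(t) \= e^{-\gb t} + \int_0^t\gb e^{-\gb f}\Phi(t-f)\dd f$ and $\Phi(s) \= \sumk\hp_k q_k(s)$. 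In particular $0\le h(r,t)\le S(t)$, and $\E h(R,t)=q_1(t)$ (this is just \eqref{e:q1int} rewritten, since marginally $A_i$ has the same law as the survival event in the $k=1$ process). Conditional independence then gives the clean identity
\begin{equation*}
q_k(t) = 1 - \E\bigl[(1-h(R,t))^k\bigr].
\end{equation*}

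Next I apply the elementary inequalities $kH-\binom{k}{2}H^2\le 1-(1-H)^k\le kH$, valid for $H\in[0,1]$ (upper bound by Bernoulli; lower bound because $\phi(H)\=1-(1-H)^k-kH+\binom{k}{2}H^2$ has $\phi(0)=\phi'(0)=0$ and $\phi''(H)=k(k-1)[1-(1-H)^{k-2}]\ge 0$). Setting $H=h(R,t)$ and taking expectations gives
\begin{equation*}
0\le kq_1(t) - q_k(t) \le \tbinom{k}{2}\E h(R,t)^2 \le \tbinom{k}{2}S(t)q_1(t),
\end{equation*}
where the last step uses $h(R,t)\le S(t)$ pointwise together with $\E h(R,t)=q_1(t)$.

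Finally, to show $S(t)=O(e^{-at})$ for every $a<\min\{\gl,\gb\}$: differentiating the defining integral gives $S'(t)=-\gb S(t)+\gb\Phi(t)$ with $S(0)=1$. The bound $q_k(t)\le kq_1(t)$ implies $\Phi(t)\le\hm q_1(t)$, and monotonicity of $e^{\gl t}q_1(t)$ (observed just before \eqref{e:bpC}) gives $q_1(t)\le e^{-\gl t}$, so variation-of-constants yields
\begin{equation*}
e^{\gb t}S(t) \le 1 + \gb\hm\int_0^t e^{(\gb-\gl)s}\dd s,
\end{equation*}
whence a short case split on the sign of $\gb-\gl$ (a linear factor appears when $\gb=\gl$) produces $S(t)\le C_a e^{-at}$ for every $a<\min\{\gl,\gb\}$. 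Combining everything,
\begin{equation*}
|q_k(t)-kq_1(t)| \le \tbinom{k}{2}C_a e^{-at}q_1(t) = O(k^2 e^{-at}q_1(t)),
\end{equation*}
which is \eqref{e:qkbon0}.
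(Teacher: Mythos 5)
Your proof is correct, and it takes a genuinely different route from the paper's. The paper applies the Bonferroni inequality to $\bigcup_i\{T_i>t\}$ and then estimates the pairwise joint probability $\Prob(T_1,T_2>t)$ directly, via a three-case decomposition (neither, one, or both spores released before $t$) and explicit integrations against the release and removal times; crucially, to convert that bound into a relative error it must divide by $q_1(t)$, which requires the a priori lower bound \eqref{e:q1crude} obtained by truncating to finitely many types and invoking the finite-type result of \cite{heinzmanextinctiontime}. You instead condition on the removal time $R$, observe that the $k$ spore lineages are conditionally i.i.d., and obtain the exact identity $q_k(t)=1-\E[(1-h(R,t))^k]$; the second-order Bonferroni term is then $\binom{k}{2}\E[h(R,t)^2]\le\binom{k}{2}S(t)\,\E[h(R,t)]=\binom{k}{2}S(t)q_1(t)$, so the factor $q_1(t)$ appears automatically and no lower bound on $q_1$ is needed. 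I checked the details: the formula $h(r,t)=S(t)-\Indi{r\le t}e^{-\gb r}S(t-r)$ is what the case split gives (the subtracted term telescopes against $\int_r^t\gb e^{-\gb f}\Phi(t-f)\dd f$), the convexity argument for $kH-\binom{k}{2}H^2\le 1-(1-H)^k$ is sound, and the variation-of-constants bound on $S$ uses only $\Phi\le\hm q_1\le\hm e^{-\gl t}$, which is available before the lemma. What your approach buys is twofold: a cleaner two-sided bound $0\le kq_1(t)-q_k(t)\le\binom{k}{2}S(t)q_1(t)$ with an explicit constant, and the elimination of the truncation step and the external citation from the proof of this lemma (indeed \eqref{e:q1crude} is used nowhere else in the paper, so the whole truncation paragraph could be dropped). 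What the paper's approach buys is that it treats the dependence structure by hand and so does not rely on the conditional i.i.d.\ structure being exact, but for this model your observation that $R$ is the sole source of dependence is correct and is the more economical path.
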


\begin{proof}
As already mentioned we have $q_k(t)\le kq_1(t)$.  
For the lower bound, suppose there is initially a single host with $k \ge 2$ spores.
Let $T_i$, $i = 1,\ldots,k$
denote the total time that
spore $i$, or its progeny, persist for.  Thus
\begin{equation}
\label{e:qkTi}
q_k(t) = \Prob\left(\bigcup_{i = 1}^k \{ T_i > t \} \right).
\end{equation}
The Bonferroni inequality yields 
\begin{equation}\label{e:qkbon}
q_k(t) \ge k q_1(t) - k^2 \Prob(T_1, T_2 > t),
\end{equation}
where we used $\Prob(T_1  > t) = q_1(t)$, and the fact that any pair of times have the same joint distribution.
If $\rho = 0$ then $T_1$ and $T_2$ are independent and there is nothing to prove.  
In the sequel we assume $\rho > 0$ 
and control the dependency between the $T_i$ using the fact that the progeny of different spores behave independently.


Suppose $F_1$ and $F_2$ denote the
independent  $\Exp(\gb)$ release times of spores $1$ and $2$, and $R$ denotes the $\Exp(\rho)$ removal time of the host.  Then, ignoring the null 
events $\{ R < t$ and $T_i, F_i > t\}$, we have
\begin{align}
\Prob(T_1, T_2 > t) & = \Prob(R,F_1,F_2 > t) + 2\Prob(T_1, F_2, R >t, ~\mathrm{and}~ F_1 < t) + \nonumber \\
& \qquad  + \Prob(T_1,T_2 > t, ~\mathrm{and}~  F_1,F_2 < t). \label{e:qx}
\end{align}
The first probability on the \rhs{} is simply $e^{-(\rho + 2\gb)t}$.  
The second probability 
can be bounded by writing it as an integral against the spore release time, and then using $q_k(t) \le k q_1(t)$, $\hm = \sumk k\hp_k$ and $q_1(t) \le e^{-\gl t}$ as follows. 
\begin{align}
\Prob(T_1, F_2, R >t, ~\mathrm{and}~ F_1 < t) & = e^{-(\rho + \gb)t} \int_0^t \sumk \hp_k q_k(t-f) \Prob(F_1 \in \dd f) \nonumber \\
& \le \hm  e^{-(\rho + \gb)t} \int_0^t q_1(t-f)\gb e^{-\gb f} \dd f \nonumber\\
& \le \hm \gb e^{-(\rho + \gb + \gl)t} \int_0^t e^{(\gl - \gb)f}\dd f \nonumber\\
& = \hm \gb  e^{-(\rho + \gb + \gl)t} \parfrac{e^{(\gl -\gb)t} - 1}{\gl - \gb},\label{e:qx2}
\end{align}
assuming $\gl \neq \gb$, otherwise the last integral evaluates to $t$.
In both cases,
\begin{align}
\Prob(T_1, F_2, R >t, ~\mathrm{and}~ F_1 < t) & = O\bigpar{ e^{-(\rho + 2\gb)t} + (1 + t)e^{-(\rho + \gb + \gl)t} } \nonumber \\
& = O\bigpar{ e^{-(\rho + 2\gb)t} + e^{-(\gb + \gl)t} }\label{e:qx20}.
\end{align}

Finally we turn to the third probability on the \rhs{} of \eqref{e:qx}.  Consider
the probability $g(t,r)$ that a given spore is released before time $r > 0$ (assuming the host is not removed first) and 
has progeny who persist till time $t \ge r$.  Repeating calculations 
similar to those in \eqref{e:qx2} shows it satisfies
\begin{equation}
g(t,r) = \int_0^r \sumk \hp_k q_k(t-f) \Prob(F_1 \in \dd f)  \le 
{\hm \gb } e^{-\gl t} \parfrac{ e^{(\gl - \gb)r} - 1}{\gl -\gb}
\end{equation}
for $\gl \neq \gb$, otherwise the bracketted term is $r$.  
Thus, the probability that two given spores are released before time $r$ (ignoring removal of the host) and bear progeny persisting till time $t \ge r$ 
satisfies
\begin{equation}\label{e:gtr2}
g(t,r)^2 \le \cc\ccdef\ccqx e^{- 2\gl t} \bigpar{ e^{2(\gl - \gb)r} + 1 + r^2},
\end{equation}
for some constant $\ccqx > 0$.  Integrating \eqref{e:gtr2} against the removal time density
yields
\begin{align}
\Prob(T_1,T_2 > t, ~\mathrm{and}~  F_1,F_2 < t) & = \int_0^t g(t,r)^2\Prob(R \in dr) + g(t,t)^2\Prob(R > t) \nonumber \\
& = O\bigpar{ e^{-2\gl t}  + e^{-(\rho + 2\gb)t} + (1 +t^2) e^{-(2\gl + \rho)t} } \nonumber \\ 
& = O\bigpar{ e^{-2\gl t}  + e^{-(\rho + 2\gb)t} }. \label{e:qx3}
\end{align}

Armed with these estimates we return to  \eqref{e:qx} and find
\begin{equation}
\Prob(T_1, T_2 > t) = O\bigpar{ e^{-(\rho+2\gb)t} + e^{-(\gb + \gl)t} + e^{-2\gl t}},
\end{equation}
and using \eqref{e:q1crude} we have
\begin{equation}\label{e:fcq}
\Prob(T_1, T_2 > t)/q_1(t) = O\bigpar{ e^{-(\rho+2\gb -\gl - \eps)t} + e^{-(\gb - \eps)t} + e^{-(\gl - \eps)t}}.
\end{equation}
Now, $\rho + 2\gb - \gl = \gb(1 + \mu) > \gb$.  So, 
each negative exponent in \eqref{e:fcq} is at least $a$ by our choice of $\eps$.  
The desired relationship \eqref{e:qkbon0} now follows from the Bonferroni inequality \eqref{e:qkbon}.
\qed \end{proof}

We will now prove that $\bpC > 0$, \ie{} that $q_1(t)$ really 
does decay like $e^{-\lambda t}$.  Apply \refL{l:qkadd} to get 
\begin{align}
\sumk \hp_k q_k(t) & \ge \sumk k\hp_k q_1(t) - O\Bigpar{ q_1(t) e^{-a t} \sumk k^2\hp_k} \nonumber \\ 
& = q_1(t) ( \hm + O(e^{-a t}) ),\label{e:qMA}
\end{align}
using the assumption \ref{d:cubeUI} that $\sumk k^2\hp_k < \infty$.  Combining 
this with the differential equation \eqref{e:q1ode} for $q_1(t)$ yields
\begin{align}
\frac{d}{dt} \bigpar{\ln q_1(t) + \gl t} & = \frac{q_1'(t)}{q_1(t)} + \gl \nonumber \\ 
& = -( \rho + \gb) + \frac{\gb}{q_1(t)} \sumk  \hp_k q_k(t) + \gl \nonumber \\
& \ge -\cc\ccdef\ccqf e^{-at},
\label{e:lnq1ode}
\end{align}
for some constant $\ccqf > 0$.  Integrating both sides,
\begin{equation}
\ln q_1(t) +\gl t \ge -\ccqf \int_0^t e^{-as}\dd s \ge -\ccqf/a. \nonumber 
\end{equation}
Hence $\bpC = \lim_{t\to\infty} e^{\lambda t}q_1(t) \ge \exp(-\ccqf/a) > 0$. 

Finally, $e^{\gl t}q_1(t)$ is non-increasing so $q_1(t) \ge \bpC e^{-\gl t}$.  Moreover, 
using \eqref{e:lnq1ode} again,
\begin{equation}
\ln (e^{\gl t} q_1(t)) \le \ccqf\int_t^\infty e^{-as}\dd s + \lim_{s\to\infty}  \ln (e^{\gl s} q_1(s) ) = O(e^{-at}) + \ln\bpC.
\end{equation}
But $\exp(O(e^{-at})) = 1 + O(e^{-at})$ as $t\to\infty$ and so \eqref{e:qexp} holds for $k = 1$.  The 
result for $k \ge 2$ follows from \eqref{e:qkbon0}.
\qed

\begin{remark}
 The proof in \cite{heinzmanextinctiontime} (for finitely many types) 
uses a linear approximation to \eqref{e:q1ode} its analogues for $q_k$, $k\ge 2$.  A 
general result about perturbations of linear equations is then 
applied to derive \eqref{e:qexp}.  We took a different approach here
due to not finding a suitable perturbation result for infinite 
systems of differential equations.
\end{remark}

\section{Proof of \refC{c:doubleexp}}\label{s:doubleexp}

Take $t = t(n) \=  \gl^{-1}\left(\ln\bigpar{\bpC \sumk k \nIk} + w\right)$, so $t > 0$ for $n$ large enough.   
Using the branching property, \refT{t:qkexp}, 
and finally assumption \eqref{d:mystery0},
\begin{align*}
\ln \Prob(\bpT \le t) & = \ln \prod_{k = 0}^{\infty} (1 - q_k(t))^{\nIk} = \sumk  \nIk \ln (1 - q_k(t)) \\
& \le -\sumk \nIk q_k(t) \\
& \le -\bpC e^{-\gl t} \sumk k\nIk + O\bigpar{e^{-(a + \gl)t}\sumk k^2 \nIk} \\
& = -e^{-w} + O\parfrac{\sumk k^2\nIk}{\lrpar{\sumk k \nIk}^{1 + (a/\gl)}} \to -e^{-w},
\end{align*}
as $n\to\infty$.

On the other hand, assumption \eqref{d:mystery0} implies the maximum number of spores 
$\dImax \= \max\{ k : \nIk \ge 1 \} $ of any initial host satisfies $\dImax = o(\sumk k\nIk)$.  So, 
for any $k\le \dImax$ we have
\begin{equation}
q_k(t) \le kq_1(t) \le ke^{-\gl t} \le \frac{\dImax}{e^w \bpC \sumk k\nIk} \to 0,
\end{equation}
as $n\to\infty$.  This implies $q_k(t) < 1/2$ eventually (for any $k$ with $\nIk \ge 1$), and consequently
\begin{equation}
\ln (1 - q_k(t)) \ge - q_k(t)(1 + q_k(t)) \ge -q_k(t) - k^2 e^{-2\gl t},
\end{equation}
using the inequality $\ln (1 - h) \ge -h(1+h)$ for $h \le 1/2$.
It follows that
\begin{equation}
\ln\Prob(\bpT \le t) = \sumk \nIk \ln (1 - q_k(t)) \ge -\sumk \nIk q_k(t) - \frac{\sumk k^2\nIk}{(e^w \bpC \sumk k\nIk)^2} \to -e^{-w},
\end{equation}
as $n\to\infty$, as in the upper bound.

\qed

\bibliography{sircmbpexptail}
\bibliographystyle{apt}

\url{pete@windridge.org.uk}

\end{document}